\newtheorem{thm}{Theorem}[section]
\newtheorem{quest}[thm]{Question}
\newtheorem{cor}[thm]{Corollary}
\newtheorem{prop}[thm]{Proposition}
\newtheorem{lem}[thm]{Lemma}
\numberwithin{equation}{section}
\renewcommand{\P}{\mathbb{P}}
\newcommand{\C}{{\mathbb{C}}}
\newcommand{\F}{\mathbb{F}}
\newcommand{\N}{\mathbb{N}}
\newcommand{\Q}{{\mathbb{Q}}}
\newcommand{\R}{{\mathbb{R}}}
\newcommand{\Z}{{\mathbb{Z}}}
\newcommand{\Lap}{\mathcal{L}}
\newcommand{\Gal}{\operatorname{Gal}}
\newcommand{\Aut}{\operatorname{Aut}}
\newcommand{\supp}{\operatorname{supp}}
\newcommand{\Ind}{\operatorname{Ind}}
\newcommand{\GL}{\operatorname{GL}}
\newcommand{\SL}{\operatorname{SL}}
\newcommand{\PSL}{\operatorname{PSL}}
\newcommand{\PGL}{\operatorname{PGL}}
\newcommand{\Tr}{\operatorname{Tr}}
\newcommand{\X}{\mathsf{X}}
\newcommand{\sS}{\mathsf{S}}
\begin{document}
\title{Can one hear the shape of a random walk?}
\author{Michael Larsen}
\email{mjlarsen@iu.edu}
\address{Department of Mathematics, Indiana University, Bloomington, IN, 47405, U.S.A.}
\thanks{The author was partially supported by NSF grants DMS-2001349 and DMS-2401098 and the Simons Foundation.}
\begin{abstract}
To what extent is the underlying distribution of a finitely supported unbiased random walk on $\Z$ determined by the sequence of times
at which the walk returns to the origin?
The main result of this paper is  that, in various senses, most unbiased random walks on $\Z$ are
determined up to equivalence by the sequence $I_1,I_2,I_3,\ldots$, where $I_n$ denotes the probability of being at the origin after $n$ steps.
We also give an application to an inverse problem from asymptotic representation theory.
The proof uses Laplace's method and a delicate Galois-theoretic analysis which ultimately depends on the classification of finite simple groups.
\end{abstract}
\maketitle
\section{Introduction}
In this paper, a \emph{random walk} will mean the sequence of partial sums of a series $\X_1+\X_2+\X_3+\cdots$
of independent and identically distributed finitely supported $\Z$-valued random variables.  It is \emph{unbiased} if its mean $E[\X_i]$ is $0$.
Its \emph{shape} will mean the common distribution of the $\X_i$, which is encoded in the generating function
\begin{equation}
\label{generating function}
\chi(t) = \sum_{k=-e}^f \kappa_k t^k,\ e,f\in \Z,
\end{equation}
where $\kappa_k = \Pr[\X_i=k]\ge 0$ for all $k$.
The shape of any random walk satisfies $\chi(1) = 1$, and the walk is unbiased if and only if $\chi'(1) = 0$.
The \emph{support}, denoted $\supp(\chi)$,
is the set of $s\in\Z$ such that $\kappa_s > 0$.  The \emph{degree}, denoted $\deg(\chi)$, is the degree of $\chi(t)$, 
regarded as a map of Riemann surfaces $\C^\times\to \C$; 
assuming $e$ and $f$ are positive, this is $n:=e+f$.
Two walks are \emph{equivalent} if they are the same up to an automorphism of $\Z$, i.e., if their shapes are the same up to possibly replacing $t$ by $t^{-1}$.
A random walk is
\emph{primitive} if $\chi$ cannot be expressed as the composition of two rational functions of degree $>1$.
The \emph{$n$th return probability} $I_n$ is the probability $\Pr[\X_1+\cdots+\X_n=0]$.
The \emph{spectrum} of the random walk will mean
the sequence of return probabilities.
Unbiased random walks are recurrent, so by the strong law of large numbers, $I_\bullet$ is determined by the set-valued random variable $\sS := \{n\in \N\mid \X_1+\cdots+\X_n=0\}$ by the formula
$$I_n = \lim_{N\to \infty} \frac{|[1,N]\cap \sS\cap (n+\sS)|}{|[1,N]\cap\sS|}$$
almost surely.

This paper is motivated by the following question:
\begin{quest}
\label{Drum}
Is a primitive unbiased random walk determined up to equivalence
by its spectrum?
\end{quest}

It is certainly not the case that random walks in general are determined up to equivalence by their spectra.
For instance, a walk whose support consists only of positive integers has $I_n=0$ for all $n>0$.  
Nor is it enough to assume that $e$ and $f$ are both positive in \eqref{generating function}.
Indeed, if $\chi(\lambda)=1$ (so that $\chi(\lambda t)$ is the shape of a different random walk)
the shapes $\chi(t)$ and $\chi(\lambda t)$ have the same spectrum.
Thus, for instance, $\frac47t^2 + \frac37t^{-1}$ and $\frac17t^2 + \frac67t^{-1}$
give rise to isospectral random walks.  This class of exceptions is eliminated by assuming the walk is unbiased.  
Another class of potential exceptions comes from the fact that, for all $n\neq 0$, random walks of shape $\chi(t)$ and $\chi(t^n)$ have the same spectrum.  The primitivity hypothesis excludes such examples for $n\neq \pm 1$. (Primitivity may be more than is needed; a stronger version of Question~\ref{Drum} would allow imprimitive walks but redefine equivalence to allow scaling by factors other than $\pm 1$.)

One way of thinking about  return probabilities is as  moments of the pushforward $\mu$ of the uniform distribution on $\R/2\pi \Z$ by $\theta\mapsto \chi(e^{i \theta})$.
For symmetric random walks (which are generally not primitive, since the symmetry implies that $\chi(t)$ can be written $P(t+t^{-1})$ for some polynomial $P$) Max Zhou \cite{Zhou}
used this point of view to prove that the spectrum determines the walk uniquely.
Indeed, $\mu$ is supported on the interval $[-1,1]\subset \R$ and is therefore determined by its moments.  By symmetry, it can be realized as the pushforward of the uniform distribution
on $[0,\pi]$ by $\chi(e^{i \theta})$, and since this map gives a bijection between a small interval $[0,\epsilon]$ in $\theta$ and a small interval $[\chi(e^{i\epsilon}),1]$,
$\mu$ actually determines $\chi$ on $[0,\epsilon]$.
The difficulty for non-symmetric walks is that $\mu$ is not supported in $\R$.  To follow the approach of \cite{Zhou}, it would be necessary to know
the expectations of $\chi^m \bar\chi^n$ or, equivalently, the probabilities
$$\Pr[\X_1+\cdots+\X_m = \X_{m+1}+\cdots+\X_{m+n}],$$
for all pairs of non-negative integers $m$ and $n$.  

The main theorem of this paper answers Question~\ref{Drum} except in certain special degrees.
\begin{thm}
\label{Main}
Suppose two primitive unbiased random walks have the same spectrum and the first has degree $n$.  If $n\neq 10$ and $n$ is 
not a perfect square, then the two walks are equivalent.
\end{thm}

The logic of the proof goes as follows:
$$\xymatrix{I_\bullet\ar[r]^-{\txt{Laplace's\\ method}}&L(s)\text{ at } \infty\ar[rr]^-{\txt{``Wick rotation''}}&&\tilde L(s)\text{ at }\infty
\ar[dd]_{\txt{inverse\\ Laplace \\ transform}}\\ \\
\{\gamma_1,-\gamma_2\}\text{ mod }K\ar[dd]^{\txt{Lagrange\\ inversion\\ theorem}}&\gamma_1-\gamma_2\text{ at }\infty\ar[l]_-{\txt{M\"uller's\\ Thm.}}&&\gamma_1(u)-\gamma_2(u)\text{ at }0\ar[ll]_{\txt{analytic\\ continuation}}\\ \\
\{[\chi^+],[\chi^-]\}\ar[rr]^-{\txt{monotonicity}}&&\{\chi(t),\chi(t^{-1})\}}$$
To flesh this out, in \S2 we consider the power series in $s$ given by the integral
\begin{equation}
\label{Laplace}
L(s) :=  \int_{-\pi}^\pi e^{s \chi(e^{i\theta})}d\theta.
\end{equation}
The spectrum $I_\bullet$ determines all the coefficients of this series and therefore the asymptotic expansion of the resulting function at $s=\infty$.
By ``Wick rotation'', we mean replacing \eqref{Laplace} by the integral
\begin{equation}
\label{Wick}
\tilde L(s) := \int_{-\infty}^\infty e^{-s\chi(e^x)}dx,
\end{equation}
whose expansion at $s=\infty$ can be read off from the expansion of \eqref{Laplace} at $s=\infty$.
A significant advantage of $\chi(e^x)$ over $\chi(e^{i\theta})$ is that it takes real values on $\R$.

Now $u = \chi(t)-1$ is convex for $t\in (0,\infty)$ and achieves its minimum value $0$ at $t=1$, so there are well-defined inverse functions $\alpha_1\colon [0,\infty)\to (0,1]$ and $\alpha_2\colon [0,\infty)\to [1,\infty)$, which are algebraic functions 
conjugate to one another over the field $K:=\C(u)$. More precisely, embedding $K$ in $K_\infty := K((u^{-1}))$, every element of the finite extension $K((u^{-1/k}))$ of $K_\infty$
defines a smooth function on $[a,\infty)$ for some $a$, where $u^{-1/k}$ is taken to be the positive real $k$th root of $u^{-1}$. 
In the case of the $\alpha_i$, $a$ can be taken to be $0$.

The integral \eqref{Wick} can be expressed as the Laplace transform of the  function $\gamma_1(u) - \gamma_2(u)$,
where $\gamma_1(u)$ and $\gamma_2(u)$ are the logarithmic derivatives of $\alpha_1(u)$ and $-\alpha_2(u)$, 
so they too are algebraic functions over $K$ in the same sense.
The Puiseux series expansion in $u$ which gives $\gamma_1-\gamma_2$ on $[0,\infty)$
is determined by the asymptotic behavior at $s=\infty$ of its Laplace transform. 

The Puiseux series in $u$ determines $\gamma_1-\gamma_2$ in a neighborhood $[0,\epsilon)$ of $0$ and therefore as an
analytic function along the positive real line, and this gives the Puiseux series at $\infty$, i.e., in $u^{-1}$.
We would like to determine $\gamma_1$ and $\gamma_2$ individually,
as far as this is possible.
At best, we can only hope to determine the pair $(\gamma_1,\gamma_2)$ up to translation by $\{(x,x)\mid x\in K\}$ and up to the exchange $(\gamma_1,\gamma_2)\leftrightarrow(-\gamma_2,-\gamma_1)$.
Writing $\chi(t) = \chi^-(t^{-1}) + \kappa_0 + \chi^+(t)$, where $\chi^{\pm}$ are polynomials with constant term $0$, we will show 
that knowing $\{\gamma_1,-\gamma_2\}$
up to translation in $K$  is enough to determine 
the unordered pair of  classes $\{[\chi^-],[\chi^+]\}$, where $[\phi]$ denotes the equivalence class of polynomials $\{\phi(\lambda t)\mid \lambda>0\}$.
This data, together with $\kappa_0 = I_1$, determines $\chi(t^{\pm})$ uniquely.

In \S3, we abstract the situation as follows. Suppose $K$ is a field of characteristic $0$ and $\gamma_1$ and $\gamma_2$ are conjugate elements
in the splitting field $M$ of the minimal polynomial of $\gamma_1$ over $K$.
In Galois theory, it is often but not always the case that $\gamma_1-\gamma_2$ determines $\{\gamma_1,-\gamma_2\}$ up to the
additive action of $K$.
In our case, we know two additional things about the set $\{\gamma_k\}$ of conjugates of $\gamma_1$. 
The permutation representation of $\Gal(M/K)$ acting on this set is primitive, and it contains an element, namely ``local monodromy at $\infty$,'' which has exactly two orbits, of sizes $e$ and $f$.

Peter M\"uller has classified \cite{Muller}  the possible permutation representations arising from primitive Laurent series by an argument which rests ultimately on the classification of finite simple groups. Unfortunately, there are 
cases within his classification scheme in which $\gamma_1-\gamma_2$ does \emph{not}
determine $\{\gamma_1,-\gamma_2\}$ modulo $K$. If $\beta_1,\ldots,\beta_5$ are the five roots of a quintic polynomial over $K$
with Galois group $S_5$, then 
$$\{\beta_i+\beta_j\mid 1\le i < j\le 5\}$$
is a set of $10$ conjugate elements $\{\gamma_k\}$ which generate $M = K(\beta_1,\ldots,\beta_5)$.
There are three different pairs $(i,j)$ so that $\gamma_i-\gamma_j = \beta_1-\beta_2$.

A second class of examples works similarly: let $K'$ denote a quadratic extension of $K$ and $P(x)\in K'(x)$ denote a polynomial with roots $\alpha_1,\ldots,\alpha_m$. Let $\beta_1,\ldots,\beta_m$ denote the
roots of $P^\sigma(x)$, where $\sigma$ generates $\Gal(K'/K)$. Let $\gamma_1,\ldots,\gamma_{m^2}$ denote all expressions of the form $\alpha_i+\beta_j$.
Some differences $\gamma_k-\gamma_l$ are of the form $\beta_2-\beta_1$, and again such differences do not determine the pair $(k,l)$.
Because of these examples, we cannot rule out the possibility of a negative answer to Question~\ref{Drum} based on primitive random walks of degree $n$ if $n=10$ or $n$ a perfect square.

Much of the complexity of this argument can be avoided by assuming that $e$ and $f$ are relatively prime, and here our result is optimal (and, moreover, can be proven without appealing to
classification.)

\begin{thm}
\label{clean}
Any  random walk with the same spectrum as an unbiased random walk with $(e,f)=1$ is equivalent to it up to an endomorphism of $(\Z,+)$.
\end{thm}

For general $e$ and $f$, one can still ask whether \emph{generically}, the spectrum determines $\chi$.
Regarding the coefficients $\kappa_i$ of $\chi$ as variables, the return probabilities $I_1,I_2,\ldots$ can be regarded as polynomial functions in the $\kappa_i$.  
For each positive integer $N$, the $N$-tuple $(I_1,I_2,\ldots,I_N)$ defines a morphism of algebraic varieties
to affine $N$-space.  This map has been considered before in connection with a conjecture of Olivier Mathieu \cite{Mat}, settled by Hans Duistermaat and Wilberd van der Kallen \cite{Dv}, asserting that for
each $e$ and $f$ there exists $N$ such that no non-zero power series maps to $(0,\ldots,0)\in \C^N$.
More recently, Daniel Erman, Gregory Smith, and Anthony V\'arilly-Alvarado \cite{ESV},
considering  $(I_1,I_2,\ldots,I_{e+f-1})$ as a map on the affine subspace 
given by $\kappa_{-e}=\kappa_f=1$,
proved that it is generically finite-to-one, and computed its degree, which turns out to be an Eulerian number.
In this paper, it is more natural to consider the affine space $X_{e,f}$ defined by the constraints
\begin{equation}
\label{affine}
\sum_{k=-e}^f \kappa_k = 1,\ \sum_{k=-e}^f k \kappa_k = 0.
\end{equation}

A strong version of Question~\ref{Drum} would ask if $(I_1,\ldots,I_N)$ is injective on $X_{e,f}$ for $N$ sufficiently large in terms of $e$ and $f$.
Here is a weaker version.

\begin{thm}
\label{generic}
Given $e$ and $f$, there exists a dense Zariski open subset $U$ of $X_{e,f}$ such that any random walk whose shape \eqref{generating function} lies in $U$
is determined up to equivalence by its spectrum.
\end{thm}

Together, Theorems \ref{Main}, \ref{clean}, and \ref{generic}
could be regarded as modest evidence in favor of a positive answer to Question~\ref{Drum}.
On the other hand, I recently learned from Swarnava Mukhopadhyay of his preprint  \cite{BGM} with Pieter Belmans and Sergey Galkin constructing pairs of unbiased random walks on $\Z^r$
with identical $I_\bullet$ which are not in any sense isometric. We seem to have independently hit upon the same title for our papers.

In the special case that $\chi$ has rational coefficients, we can express it as $\frac1d \sum_k w_k t^k$, where the $w_k$ are non-negative integers. If $V$ denotes the complex $d$-dimensional representation 
of $U(1)$ in which each weight $k$ appears with multiplicity $w_k$, we can identify $I_n$ with $d^{-n} \dim (V^{\otimes n})^{U(1)}$.
There has long been interest in the asymptotic behavior of the multiplicities of irreducible (or more generally, indecomposable) summands of $V^{\otimes n}$, where $V$ is an object in a tensor category.
See, for instance, \cite{Biane, CEO, PR, TZ} and the references therein. In particular, the asymptotics of the multiplicity of the trivial representation are well understood for determinant-$1$ faithful representations $V$ of compact Lie groups $G$, 
and it is known how to read $\dim V$, $\dim G$, and $\mathrm{rank}\,G$ from them. 

These multiplicities can also be understood as moments of the Sato-Tate measure $\mu_{G,V}$, i.e., the pushforward of the Haar measure of G by the character of the representation $V$. 
In the spirit of Question~\ref{Drum}, one might ask whether the sequence of moments of $\mu_{G,V}$ determines a compact subgroup $G\subset \SL(V)$ up to conjugation.
It is known \cite{AYY,LP} that the answer is no; even full knowledge of $\mu_{G,V}$ does not in general allow one to determine $G$ up to conjugation.
However, known counterexamples do not exclude the possibility that when $G\cong U(1)$, $\mu_{U(1),V}$ does determine $U(1)$ up to conjugation in $\SL(V)$.

The failure of $\mu_{G,V}$ to determine $(G,V)$ gives one of the earliest known mechanisms for constructing pairs of isospectral Riemannian manifolds, 
going back to the work of Toshikazu Sunada \cite{Sunada} in the case of finite groups $G$ 
(and, in fact, further back to a 1926 paper of Fritz Gassmann \cite{Gassmann}, which gives, in the setting of Dedekind zeta-functions, the earliest hints of isospectrality).  See also \cite{AYY,Sutton} for the case of connected Lie groups $G$.
This seems to give the most direct connection between Question~\ref{Drum} and the problem popularized in the classic paper of Mark Kac \cite{Kac}.  
However, there are other analogous features, for instance the fact that in each case, information on the ``geometric'' side is determined by the asymptotic behavior of the ``spectrum''. 

Motivated by Pierre Deligne's Sato-Tate theorem \cite[Theorem~3.5.3]{Weil} for the distribution of values of families of complete exponential sums over finite fields,
Nicholas Katz many years ago suggested
the problem of how much information about the pair $(G,V)$ is encoded in $\mu_{G,V}$ and particularly in its moments.
Richard Pink explained to me how to use Laplace's method to compute the asymptotics of the dimensions of the moments of $\mu_{G,V}$.
I am grateful to both of them.

\section{Laplace's method}
In this section $\chi$ is still defined by \eqref{generating function}, with $\kappa_k\ge 0$ for all $k$,
$\chi(1) = 1$, and $\chi'(1) = 0$, but
we relax the hypothesis that $\chi$ is primitive, making only the weaker assumption that 
the greatest common divisor of the elements of 
$$S:=\supp(\chi)=\{k\mid \kappa_k>0\}$$ 
is $1$.

We write
$$I_n = (2\pi)^{-1}\int_0^{2\pi} \chi(e^{ix})^n dx = (2\pi)^{-1}\int_{-\pi}^{\pi} \chi(e^{ix})^n dx$$
for $n\in \N$.
By the triangle inequality, $|\chi(e^{ix})|\le 1$ for all $x\in\R$,
so $|I_n|\le 1$ for all $n$.
Therefore, we can exchange integral and sum to show
$$(2\pi)^{-1}e^{-s}\int_{-\pi}^\pi e^{s \chi(e^{ix})} dx = (2\pi)^{-1}e^{-s}\int_{-\pi}^\pi \sum_{n=0}^\infty \frac{s^n}{n!}\chi(e^{ix})^n dx= e^{-s}\sum_{n=0}^\infty \frac{s^n I_n}{n!}.$$
We call this expression $L(s)$.
For all integers $n\ge 2$, let
$$J_n = \sum_{k = -e}^f \kappa_k k^n.$$

I learned how to prove the following proposition from Richard Pink in the 1980s.

\begin{prop}
\label{L asymptotics}
There exist real numbers $1=A_0,A_1,A_2,\ldots$ which can be expressed polynomially in $\{J_2^{-n/2}J_n\mid n\ge 3\}$ such that
for all $m\in\N$,
$$L(s) = \sqrt{J_2/2\pi}\sum_{l=0}^m A_l s^{-l-1/2} + o(s^{-m-1/2})$$
as $s\to\infty$.  
\end{prop}
\begin{proof}
As
$$s(\chi(e^{ix})-1) = s\Bigl(\frac{-J_2 }2x^2 + \sum_{n=3}^\infty \frac{i^n J_n x^n}{n!}\Bigr),$$
substituting $u = \sqrt{J_2 s} x$, we have
\begin{equation}
\label{L_f}
L(s) = (2\pi)^{-1}(J_2 s)^{-1/2} \int_{-\sqrt{J_2 s} \pi}^{\sqrt{J_2 s}\pi} e^{-\frac{u^2}2}
\exp\Bigl(s \chi\Bigl(e^{iu/\sqrt{J_2 s}}\Bigr) - s + \frac{u^2}2\Bigr) du.
\end{equation}
Fix $\epsilon < 1/(12m+6)$.  By the remainder form of Taylor's theorem, for $|u| < s^\epsilon$, we have
$$e^{iu/\sqrt{J_2 s}}-\sum_{n=0}^{2m+2}\frac 1{n!}\Bigl(\frac{iu}{\sqrt{J_2 s}}\Bigr)^n = O(s^{-m-3/2+(2m+3)\epsilon}) = o(s^{-m-1}),$$
so
$$s \chi\bigl(e^{iu/\sqrt{J_2 s}}\bigr) - s + \frac{u^2}2 = \sum_{n=3}^{2m+2}\frac {i^nJ_n}{n!J_2^{n/2}} u^n s^{1-n/2}+ o(s^{-m}).$$
Since this sum is $O(u^3 s^{-1/2}) = o(s^{-m/(2m+1)})$,
by a second application of Taylor's theorem,
\begin{equation}
\label{Taylor}
\begin{split}
\exp\Bigl(s \chi\Bigl(e^{{iu}/{\sqrt{J_2 s}}}\Bigr) - s + \frac{u^2}2\Bigr) &= \sum_{r=0}^{2m}\frac 1{r!}\Bigl(s \chi\Bigl(e^{iu/\sqrt{J_2 s}}\Bigr) - s + \frac{u^2}2\Bigr)^r\ + o(s^{-m})\\
&=\sum_{r=0}^{2m}\frac 1{r!}\Bigl(\sum_{n=3}^{2m+2}\frac {J_n}{n!J_2^{n/2}} (iu)^n s^{1-n/2}\Bigr)^r+o(s^{-m}).
\end{split}
\end{equation}
Expanding the inner sum on the right hand side, we have a polynomial in $w = iu$ and $z = s^{-1/2}$ where the $w$-degree of any monomial is at most three times the $z$-degree.
Therefore, for suitable $g_{j,k}$
\begin{equation}
\label{truncated}
\int_{-s^\epsilon}^{s^\epsilon} e^{-u^2/2} \exp\Bigl(s\chi\Bigl(e^{{iu}/{\sqrt{J_2 s}}}\Bigr) - s + \frac{u^2}2\Bigr) du = \sum_{j=0}^{6m} \sum_{k=0}^{2m} g_{j,k} i^js^{-k/2}\int_{-s^\epsilon}^{s^\epsilon} e^{-u^2/2} u^j du + o(s^{-m}).
\end{equation}
We have
$$\int_{-s^\epsilon}^{s^\epsilon} e^{-u^2/2} u^j du
=\begin{cases}
\sqrt{2\pi}(j-1)!!+o(s^{-m})&\text{ if $j$ is even,}\\
0&\text{ if $j$ is odd.}
\end{cases}
$$
Since only even values of $j$ contribute and $g_{j,k} = 0$ when $j+k$ is odd, we can limit the sum
\eqref{truncated} to terms for which $j$ and $k$ are both even; the sign of the contribution is $(-1)^{j/2}$.
The real part of $\chi(e^{ix_0})$ is strictly less than $1$ unless $e^{i k x_0} = 1$ for all $k\in S$.
As the greatest common divisor of the elements of $S$ is $1$, this occurs only if $x_0\in 2\pi \Z$.  Therefore, on $[-\pi,\pi]$, the real part of $\chi(e^{ix})$ achieves its maximum only at $x=0$, and we can write
\begin{equation}
\label{real bound}
\Re(\chi(e^{ix})) \le 1-\delta x^2
\end{equation}
for some $\delta > 0$.  It follows that the integrand in \eqref{L_f} is at most $e^{-\delta s^\epsilon}$ outside $[-s^\epsilon,s^\epsilon]$, so this part of the integral contributes $o(s^{-m})$.
\end{proof}
Next, we replace $\chi(e^{ix})$ by $\chi(e^x)$ and $L(s)$ by
$$\tilde L(s) = (2\pi)^{-1} \int_{-\infty}^\infty e^{-s(\chi(e^x)-1)}dx.$$
As $\chi(e^x)$, regarded as a function $\R\to \R$,  is convex, with its (unique) minimum at $x=0$, 
this integral converges absolutely for all $s$ in the right half plane.
In analogy with Proposition~\ref{L asymptotics}, we have:
\begin{prop}
\label{tilde asymptotics}
With $1=A_0,A_1,\ldots$ defined as before and $m\in\N$,
$$\tilde L(s) = \sqrt{J_2/2\pi} \sum_{l=0}^m (-1)^l A_l s^{-l-1/2} + o(s^{-m-1/2}).$$
\end{prop}
The proof is essentially the same as before except that we need $\chi(e^x) \ge 1+\delta x^2$ instead of \eqref{real bound}.
The former statement follows from  the weighted arithmetic-geometric mean inequality:
$$ \frac {d^2\chi(e^x)}{dx^2}= \sum_k a_k k^2 e^{kx} \ge \prod_k (k^2 e^{kx})^{a_k} = \prod_k k^{2a_k}.$$
The analogue of \eqref{Taylor} is
\begin{equation*}
\begin{split}
\exp\bigl(-s \chi\Bigl(e^{u/{\sqrt{J_2 s}}}\Bigr) + s + \frac{u^2}2\bigr) &= \sum_{r=0}^{2m}\frac 1{r!}\bigl(-s \chi\Bigl(e^{u/{\sqrt{J_2 s}}}\Bigr) + s + \frac{u^2}2\bigr)^r\ + o(s^{-m})\\
&=\sum_{r=0}^{2m}\frac 1{r!}\Bigl(-\sum_{n=3}^{2m+2}\frac {J_n}{n!J_2^{n/2}} u^n s^{1-n/2}\Bigr)^r+o(s^{-m}).
\end{split}
\end{equation*}
Expanding,
each monomial contributing $u^j s^{-k/2}$ comes from the term 
$$\Bigl(-\sum_{n=3}^{2m+2}\frac {J_n}{n!J_2^{n/2}} u^n s^{1-n/2}\Bigr)^r,$$
with $r = (j-k)/2$, giving a sign of 
$(-1)^r = (-1)^{\frac{j-k}2}$. In \eqref{Taylor}, the corresponding sign from
$$\Bigl(\sum_{n=3}^{2m+2}\frac {J_n}{n!J_2^{n/2}} (iu)^n s^{1-n/2}\Bigr)^r$$
was $(-1)^{j/2} = i^j$. Thus, we pick up an additional factor of $(-1)^{(j-k)/2}/(-1)^{j/2} = (-1)^{-k/2}$. Since only even $k$ contribute and 
we are looking at the $s^{-l-1/2}$  term, where $k=2l$, this gives an overall sign change of $(-1)^l$ relative to Proposition~\ref{L asymptotics},
as claimed.
Combining Propositions~\ref{L asymptotics} and \ref{tilde asymptotics}, the spectrum determines, first the $A_i$, and then
the asymptotic series for
$\tilde L(s)$ as $s\to \infty$.  Define $\phi_1,\phi_2\colon [0,\infty)\to [0,\infty)$ to be the bijective functions $\phi_1(x) = \chi(e^x)-1$ and 
$\phi_2(x) = \chi(e^{-x}) - 1$.  Writing
$$\int_{-\infty}^\infty e^{-s (\chi(e^x)-1)} dx = \int_0^\infty e^{-s\phi_1(x)} dx + \int_0^{\infty} e^{-s \phi_2(x)} dx.$$
and substituting $u= \phi_1(x)$ in the first summand on the right hand side and $u = \phi_2(x)$ in the second, we obtain
$$2\pi \tilde L(s) = \int_{-\infty}^\infty e^{-s (\chi(e^x)-1)} dx = \int_0^\infty e^{-su} \frac{du}{\phi_1'(\phi_1^{-1}(u))} + \int_0^\infty e^{-su} \frac{du}{\phi_2'(\phi_2^{-1}(u))}.$$
We can write this as the Laplace transform $\Lap(\gamma_1(u)-\gamma_2(u))$, where
$$\gamma_1(u)= \phi_1'(\phi_1^{-1}(u))^{-1} = \frac 1{\alpha_1(u)\chi'(\alpha_1(u))}$$
and
$$\gamma_2(u)= -\phi_2'(\phi_2^{-1}(u))^{-1} = \frac{1}{\alpha_2(u)\chi'(\alpha_2(u))}$$
The asymptotic growth of $\Lap(\gamma_1-\gamma_2)$ at $s=\infty$ determines the Puiseux expansion of $\gamma_1(u)-\gamma_2(u)$ at $u=0$, which is to say, at $z=1$.
Since an algebraic function is determined by its Puiseux expansion,
Proposition~\ref{tilde asymptotics} now implies that the sequence $A_0,A_1,\ldots$ determines $\gamma_1-\gamma_2$.
\section{Primitive Galois groups}
Let $K$ be any field of  characteristic $0$ and $M/K$ a finite Galois extension with group $G$.  Let $X=\{\delta_1,\ldots,\delta_n\}$ and $Y= \{\varepsilon_1,\ldots,\varepsilon_m\}$ denote $G$-orbits in $M$ 
each of which generates $M$ over $K$. We assume that $K(\delta_i)$ and $K(\varepsilon_j)$ are extensions of $K$ with no intermediate fields, so $G$ acts primitively on $X$ and $Y$.

\begin{prop}
\label{two 2-trans}
Suppose $G$ acts $2$-transitively on $X$ and $Y$. Let $G_i$ (resp. $H_i$) denote the stabilizer of $\delta_i$ (resp. $\varepsilon_i$).
Suppose that if  
\begin{equation}
\label{2-point}
G_1\cap G_2 = H_1\cap H_2,
\end{equation}
then $\{H_1,\ldots,H_m\} = \{G_1,\ldots,G_n\}$.
If $\varepsilon_1-\varepsilon_2 = \delta_1-\delta_2$,
then either $\varepsilon_1-\delta_1\in K$ or $\varepsilon_1+\delta_2\in K$.
\end{prop}

\begin{proof}
Replacing each $\delta_i$ by $\delta_i - \frac 1n\sum_i \delta_i$ and each $\varepsilon_i$ by $\varepsilon_i - \frac 1m\sum_i \varepsilon_i$,
we may assume without loss of generality that  $\Tr_{M/K} \delta_i=\Tr_{M/K}\varepsilon_j=0$ for all $i$ and $j$.

Let $L := K(\delta_1)$, so $\Gal(M/L) = G_1$.
Let $V := \Q X$ denote the $\Q$-vector space with basis $X$. As a $G$-representation, it is $\Ind_{G_1}^G \Q$.
We denote the quotient of $V$ by its $1$-dimensional $G$-invariant subspace by $V_0$ and identify it with the set of $\Q$-valued functions on $X = G/G_1$ summing to $0$.
It is well known that the $2$-transitivity of the action on $X$ implies the absolute irreducibility of $V_0$.
Indeed, $V\cong V_0\oplus \Q$ as $G$-representation, but the inner product of the character of $V_0$ with itself is easily seen to be $2$,
so both factors must be absolutely irreducible. 

Consider the map $\phi\colon \ker\Tr_{L/K}\otimes V_0\to M$ of $K[G]$-modules  given by 
$$\gamma\otimes \sum_{i=1}^n a_i [g_i G_1] \mapsto \sum_{i=1}^n a_i g_i(\gamma),$$
where $g_i G_1$ is the set of elements of $G$ sending $\delta_1$ to $\delta_i$.
As $\ker\Tr_{L/K}\otimes V_0$ is isotypic, $\ker\phi$ must be of the form $L_0\otimes V_0$, where $L_0$ is a $K$-subspace of $\ker \Tr_{L/K}$.
However, $\gamma\otimes V_0\subset \ker \phi$ implies $\gamma\in M^G=K$. Since $\gamma$ has trace $0$, it must be $0$, implying $\phi$ is injective.

If  $\delta_1-\delta_2 = \delta_i-\delta_j$, then
$$\phi(\delta_1\otimes ([g_1G_1]-[g_2G_1] -[g_i G_1] + [g_j G_1])) = 0,$$
so $i=1$ and $j=2$. Thus, 
$$\Gal(M/K(\delta_1-\delta_2)) = G_1 \cap G_2.$$
Applying the same reasoning to the $\varepsilon_i$,
$$H_1\cap H_2 = \Gal(M/K(\varepsilon_1-\varepsilon_2)) = \Gal(M/K(\delta_1-\delta_2)) = G_1 \cap G_2,$$
so by hypothesis, $\{H_1,\ldots,H_m\} =  \{G_1,\ldots,G_n\}$. Writing $G_i$ for $H_1$ and $G_j$ for $H_2$, and defining $\zeta:= g_i^{-1}\varepsilon_1\in \ker \Tr_{L/K}$, we obtain
$$\phi(\delta_1 \otimes ([g_1 G_1] - [g_2 G_1])) = \phi(\zeta\otimes ([g_i G_1] - [g_j G_1])).$$
By the injectivity of $\phi$, this implies that for some $c\in \Q^\times$, $\delta_1 = c\zeta$ and 
$$c[g_1 G_1] - c[g_2 G_1] = [g_i G_1] - [g_j G_1].$$
Thus $c=\pm 1$.
If $c=1$, then $\delta_1 = \zeta$ and $g_i G_1 = g_1 G_1$, so $\varepsilon_1 = g_i\zeta = \zeta  = \delta_1$. If $c=-1$, then $\delta_1 = -\zeta$ and $g_2 G = g_i G$, so $\delta_2 = g_2 \delta_1 = - g_2 \zeta = - g_2 g_i^{-1} \varepsilon_1 = -\varepsilon_1$.

\end{proof}

The following two results deal with cases in which the same group has two non-isomorphic $2$-transitive actions and shows that \eqref{2-point} cannot hold in these cases.
There do exist groups which admit two non-isomorphic $2$-transitive actions for which \eqref{2-point} is satisfied, but, as we prove below, the local monodromy condition we impose on $\Gal(M/K)$ 
rules out such cases.

\begin{lem}
\label{2-primitive}
If $G$ acts $3$-transitively on $X$ and $2$-transitively on $Y$, then for all distinct $\delta_1,\delta_2\in X$ and distinct $\varepsilon_1,\varepsilon_2\in Y$, the equality \eqref{2-point}
implies that the actions on $X$ and $Y$ are isomorphic.
\end{lem}

\begin{proof}
We have already seen that we must have 
$$[G:G_1\cap G_2] = |X|(|X|-1) = |Y|(|Y|-1),$$ 
so the equality \eqref{2-point}  implies $|X| = |Y|$.
Now,
$$G_1\cap G_2 \subseteq G_1\cap H_1 \subseteq G_1.$$
Since $G_1$ acts $2$-transitively on $X\setminus\{\delta_1\}$, it acts primitively,  so $G_1\cap G_2$ is a maximal proper subgroup of $G_1$, implying that equality must hold in one of these two inclusions.
If equality holds in the first inclusion, then the orbit of $\delta_1$ under $H_1$ has cardinality
$$[H_1: G_1\cap H_1] = [G_1:G_1\cap G_2] = |X|-1,$$
so the complement of the $H_1$-orbit of $\delta_1$ is a single element $\delta_i\in X$, and $H_1$ must stabilize $\delta_i$. This implies $H_1 = G_i$.
Otherwise, $G_1\cap H_1 = G_1$, implying $G_1 = H_1$.
\end{proof}

\begin{prop}
\label{PSL}
If $p$ is prime, $q$ is a power of $p$, $k\ge 3$ with equality only if $q=4$,
$$\PSL_k(q) \le G \le \PGL_k(q)\rtimes \Gal(\F_q/\F_p),$$
and $X = \F_q\P^{k-1}$ with the standard $G$-action on points,
and $Y$ is a $2$-transitive action of $G$ not isomorphic to $X$, then $Y$ must be the standard action of $G$ on hyperplanes in $\P\F_q^{k-1}$, and \eqref{2-point} cannot hold.
\end{prop}

\begin{proof}
By \cite[p. 23]{Atlas}, $\PSL_3(4)$ has, up to conjugation, exactly $2$ subgroups of index $21$, the point stabilizer and the line stabilizer, so we may assume
$k\ge 4$.
By \cite{LZ}, every non-trivial character of $\PSL_k(q)$ has degree $\ge q^{k-1}-1$, so every subgroup of $\PSL_k(q)$ has index $\ge q^{k-1} > |X|/2$.
This implies $G_1^\circ := G_1\cap \PSL_k(q)$ has index $|X|$.
Thus, $G_1 = N_G G_1^\circ$, which means the action of $G$ is determined by the action of $\PSL_k(q)$.
It suffices, therefore, to prove that the only $2$-transitive actions of $\PSL_k(q) = \Aut(X)$ on $\frac{q^k-1}{q-1}$ points are the actions on points and on hyperplanes of $X$.
Henceforth, we assume $G = \Aut(X)$.

Next we observe that $|X|^{3/2} \le 4 q^{(3/2)(k-1)} < \frac{q^{k^2-1}}4 < |G|$, so $|G_1|^3 \ge |G|$. The possibilities for $(G=\PSL_k(q),G_1)$ are therefore tabulated in Proposition 4.7 and Table 7 of \cite{AB}. 
We claim that the only cases in which $G_1$ has index $|X|$ in $G$ belong to class $\mathcal{C}_1$ (stabilizers of subspaces), and the subspace must be of dimension $1$ or $k-1$.
We make use of Zsigmondy's theorem, which asserts that for $r>2$ and $(q,r)\neq (2,6)$, $q^r-1$ has a prime divisor $\ell_r$ which does not divide $q^s-1$ for any $s\in [2,r]$ (so, in particular, $\ell_r>r$.
There may be more than one, in which case we choose the largest one.

When $(q,k)=(2,7)$, we take $\ell := \ell_5 = 31$; otherwise,
we take  $\ell := \ell_{k-1}\ge k$. 
In every case $\ell$ divides $|G|$ but not $|X|$ and  therefore divides $|G_1|$.
In cases (ii)--(v) of \cite[Proposition 4.7]{AB}, $|H|$ is not divisible by $\ell$. Case (i) subdivides into stabilizers of non-degenerate forms and stabilizers of subspaces. For stabilizers of subspaces $H$, if $k$ is even,
 $\ell_{k-1}$ does not divide $|H|$ and if it is odd, $\ell_{k-2}$ does not divide $|H|$ (by the product formulas for the orders of orthogonal and symplectic groups, \cite[p. xvi]{Atlas}).
For stabilizers of subspaces where dimension and codimension are both at least $2$, $\ell$ does not divide $|H|$ except possibly when $\ell = \ell_{r-2}$, i.e., in the case $(q,k) = (2,7)$ where the subspace is of dimension $2$ or $5$. In this case, the codimension is $21\cdot 127$, while $|X| = 127$.  The Table 7 cases with $G = \PSL_k(q)$ are $(\PSL_5(7),U_4(2))$, $(\PSL_4(4),A_6)$, $(\PSL_6(3),M_{11})$, and $(\PSL_4(2),A_7)$.
In the first three cases, $|H|$ is not divisible by $\ell_3 = 19$, $\ell_3=7$, and $\ell_3 = 13$ respectively. For the last case, the index of $[G:H] = 8\neq 15 = |X|$.

Finally, we observe that the intersection of two hyperplane stabilizers stabilizes one point (namely the intersection of the hyperplanes, when they are lines in $\P^2$) and no points at all (in the case $k\ge 4$), so it cannot 
in any case equal the intersection of two distinct point stabilizers.

\end{proof}

\begin{prop}
\label{two-orbit}
Let $G$ be a finite group acting faithfully and primitively on a set $X$ such that the size $n$ of $X$ is not a perfect square and $n\neq 10$.
If $G$ has an element with exactly two orbits in $X$, of lengths $f\ge e>1$, then the action on $X$ is $2$-transitive. 
Moreover if $G$ has two such actions, they must be isomorphic.
\end{prop}
\begin{proof}
The determination of primitive permutation groups with an element whose orbit consists of exactly two cycles (of lengths $f\ge e$) is due to M\"uller \cite[Theorem~3.3]{Muller}
and reproduced here in Tables 1--3,  which present affine actions, product actions, and sporadic actions with some added information which is useful for our purposes:
the order of $G$, the number of simple factors in its Jordan-H\"older series, and the possible values of $1/e + 1/f$.
To be clear, the classification in these tables is up to group isomorphism, not inner automorphism.
Note also that there is a misprint in the table in \cite{Muller}, where sporadic case (e) is erroneously stated to have $n=19$.

Excluding cases where $n=10$ or $n$ is a square, the affine possibilities are cases (b) and (d) and the second subcase of (e).
Note that we have excluded case (a) from this list because $e=1$, case (c) because $n$ is square, and all  the other subcases of case (e), again because $n$ is square. 
The product cases in \cite{Muller} are all excluded by our hypothesis that $n$ is not a square.
Finally, our subset of the list of sporadic cases consists of cases (a), (d), and (f)--(i).

\begin{table}[htbp]
\centering
\caption{Affine Actions}
\label{tab:affine_actions}
\begin{tabular}{llccccr}
\hline
Label & Group & Order & \begin{tabular}[c]{@{}c@{}}Simple\\Factors\end{tabular} & $n$ & $e$ & $1/e+1/f$ \\
\hline
(a) & $> \F_{p^m}\rtimes\GL_{m/t}(p^t)$& $\approx p^{m+m^2/t}$ & $0$ or $1$ & $p^m$ & $1$ & $\frac{p^m}{p^m-1}$ \\[0.5ex]
(b) & $\F_2^2\rtimes \GL_2(2)$ & $24$ & $0$ & $4$ & $2$ & $1$ \\
& $\F_2^3\rtimes \GL_3(2)$ & $1344$ & $1$ & $8$ & $2$ & $2/3$ \\
& $\F_2^4\rtimes \GL_4(2)$ & $322560$ & $1$ & $16$ & $2$ & $4/7$ \\
& $\F_3^2\rtimes \GL_2(3)$ & $432$ & $0$ & $9$ & $3$ & $1/2$ \\
& $\F_5^2\rtimes \GL_2(5)$ & $12000$ & $1$ & $25$ & $5$ & $1/4$ \\
& $\F_p^m\rtimes \GL_m(p)$ &$\approx p^{m^2+m}$ & $1$ & $p^m$ & $p$ & $\frac{p^{m-2}}{p^{m-1}-1}$ \\[0.5ex]
(c) & $\F_p^2\rtimes N,\;p>2$ & $\le 2(p-1)p^2$ & $0$ & $p^2$ & $p$ & $\frac{1}{p-1}$ \\[0.5ex]
(d) & $\F_2^m\rtimes\GL_m(2)$ & $\approx 2^{m^2+m}$ & $1$ & $2^m$ & $4$ &$\frac{2^{m-4}}{2^{m-2}-1}$ \\[0.5ex]
(e) & $A_4$ & $12$ & $0$ & $4$ & $2$ & $1$ \\
& $\F_8\rtimes(\F_8^\times\rtimes C_3)$ & $96$ & $0$ & $8$ & $2$ & $2/3$ \\
& $\F_9\rtimes(\F_9^\times\rtimes C_2)$ & $108$ & $0$ & $9$ & $3$ & $1/2$ \\
& $\F_{16}\rtimes(C_5\rtimes C_4)$ & $320$ & $0$ & $16$ & $8$ & $1/4$ \\
& $\F_{16}\rtimes(\F_{16}^\times \rtimes C_4)$ & $512$ & $0$ & $16$ & $8$ & $1/4$ \\
& $\F_{16}\rtimes(C_3^2\rtimes C_4)$ & $576$ & $0$ & $16$ & $8$ & $1/4$ \\
& $\F_{16}\rtimes(\SL_2(4)\rtimes C_2)$ & $1920$ & $1$ & $16$ & $8$ & $1/4$ \\
& $\F_{16}\rtimes(\GL_2(4)\rtimes C_2)$ & $1152$ & $1$ & $16$ & $8$ & $1/4$ \\
& $\F_{16}\rtimes A_6$ & $5760$ & $1$ & $16$ & $8$ & $1/4$ \\
& $\F_{16}\rtimes \GL_4(2)$ & $322560$ & $1$ & $16$ & $8$ & $1/4$ \\
& $\F_{16}\rtimes(S_3^2\rtimes C_2)$ & $1152$ & $0$ & $16$ & $4$ or $8$ & $1/3$ or $1/4$ \\
& $\F_{16}\rtimes S_5$ & $1920$ & $1$ & $16$ & $4$ or $8$ & $1/3$ or $1/4$ \\
& $\F_{16}\rtimes S_6$ & $11520$ & $1$ & $16$ & $4$ or $8$ & $1/3$ or $1/4$ \\
& $\F_{16}\rtimes A_7$ & $40320$ & $1$ & $16$ & $2$ or $8$ & $4/7$ or $1/4$ \\
& $\F_{25}\rtimes G_1$ & $2400$ & $0$ & $25$ & $5$ & $1/4$ \\
\hline
\end{tabular}
\end{table}

\begin{table}[htbp]
\centering
\caption{Product Actions}
\label{tab:product_actions}
\begin{tabular}{llccccr}
\hline
Label & Group & Order & \begin{tabular}[c]{@{}c@{}}Simple\\Factors\end{tabular} & $n$ & $e$ & $1/e+1/f$ \\
\hline
(a) & $S_r^2\rtimes C_2$ ($r\ge 5$) & $2r!^2$ & $2$ & $r^2$ & $ar$ &$\frac 1{a(r-a)}$ \\[0.5ex]
& & & & & $(a,n)=1$ & \\[0.5ex]
(b) & $\PGL_2(p)^2\rtimes C_2$& $2(p^3-p)^2$  & $2$ & $(p+1)^2$ & $p+1$ & $1/p$ \\
& ($p\ge 5$) & & & & & \\
\hline
\end{tabular}
\end{table}

\begin{table}[htbp]
\centering
\caption{Sporadic Actions}
\label{tab:sporadic_actions}
\begin{tabular}{llccccr}
\hline
Label & Group & Order & \begin{tabular}[c]{@{}c@{}}Simple\\Factors\end{tabular} & $n$ & $e$ & $1/e+1/f$ \\
\hline
(a) & $A_5$ & $60$ & $1$ & $5$ & $1$ or $2$ & $5/4$ or $5/6$ \\
& $S_5$ & $120$ & $1$ & $5$ & $1$ or $2$ & $5/4$ or $5/6$ \\
& $A_n$ & $n!/2$ & $1$ & $n$ & $1,\ldots,\lfloor n/2\rfloor$ & $\frac n{e(n-e)}$\\
& ($n\ge 6$) & & & & & \\
& $S_n$ & $n!$ & $1$ & $n$ & $1,\ldots,\lfloor n/2\rfloor$ & $\frac n{e(n-e)}$\\
& ($n\ge 6$) & & & & & \\[0.5ex]
(b) & $A_5$ & $60$ & $1$ & $10$ & $5$ & $2/5$ \\
& $S_5$ & $120$ & $1$ & $10$ & $5$ & $2/5$ \\[0.5ex]
(c) & $>\PSL_2(p)$ & $\le p^3-p$ & $0$ or $1$ & $p+1$ & $1$ & $\frac{p+1}{p}$ \\[0.5ex]
(d) & $>\PSL_m(q)$ & $\approx q^{m^2-1}$ & $1$ & $\frac{q^m-1}{q-1}$ & $\frac{q^m-1}{2(q-1)}$ & $\frac{4(q-1)}{q^m-1}$ \\
& ($q$ odd) & & & & & \\[0.5ex]
(e) & $M_{10}$ & $720$ & $1$ & $10$ & $2$ & $5/8$ \\
& $M_{10}\rtimes C_2$ & $1440$ & $1$ & $10$ & $2$ & $5/8$ \\[0.5ex]
(f) & $\PSL_3(4)\rtimes C_2$ & $40320$ & $1$ & $21$ & $7$ & $3/14$ \\
& $\PGL_3(4)\rtimes C_2$ & $80640$ & $1$ & $21$ & $7$ & $3/14$ \\[0.5ex]
(g) & $M_{11}$ & $7920$ & $1$ & $12$ & $1$ or $4$ & $12/11$ or $8/3$ \\[0.5ex]
(h) & $M_{12}$ & $95040$ & $1$ & $12$ & $1$, $2$, $4$, or $6$ & $\frac{12}{11}$, $\frac{3}{5}$, $\frac{3}{8}$, or $\frac{1}{3}$ \\[0.5ex]
(i) & $M_{22}$ & $443520$ & $1$ & $22$ & $11$ & $2/11$ \\
& $M_{22}\rtimes C_2$ & $887040$ & $1$ & $22$ & $11$ & $2/11$ \\[0.5ex]
(j) & $M_{24}$ & $244823040$ & $1$ & $24$ & $1$, $3$, or $12$ & $\frac{24}{23}$, $\frac{8}{21}$, or $\frac{1}{6}$ \\
\hline
\end{tabular}
\end{table}

For the affine cases, the action of $G$ is $2$-transitive because $\GL_n(q)$ acts transitively on the non-zero vectors of $\F_q^n$.
For the sporadic list, $A_n$  (resp. $S_n$) is well-known to act $(n-2)$-transitively (resp. $n$-transitively) in its natural permutation action, so case (a) is always $3$-transitive.
Cases (g)--(j) are likewise $3$-transitive; see \cite[Table 7.4]{Cameron}.
For cases (d) and (f), the fact that $\PSL_k(F)$ acts $2$-transitively on $F\P^{k-1}$ for every field $F$ and every $k\ge2$ is immediate from the fact that if 
$\{e_1, e_2\}$ is a linearly independent set in $F^k$ , there exists an invertible linear transformation fixing $e_1$ and $e_2$ and therefore a determinant-$1$ transformation fixing $e_1$ and $Fe_2$. 

Next we prove that in each case there is no other $2$-transitive action with the same $2$-point stabilizer $G_1\cap G_2$. In the affine cases, every maximal subgroup $H$ of $G$ either contains 
the normal subgroup $N := \F_q^n$ or maps onto $G/N$. In the former case, $H$ and all its conjugates contain $N$, so the intersection of two conjugates cannot fix a point in the original affine action.
If they map onto $G/N$, then since this quotient acts irreducibly on $\F_q^n$, $H$ must be the image of a section of the quotient map $G\to G/N$. To prove that all such sections are conjugate, it suffices to
prove that $H^1(G/N,N) = 0$ in group cohomology. Let $Z\subset \GL_n(q)\lhd G/N$ denote the group of scalar matrices. In the inflation-restriction sequence
$$0 \to H^1((G/N)/Z, N^Z)  \to H^1(G/N,N) \to H^1(Z,N)^{G/N}$$
we have $H^1((G/N)/Z, N^Z) = H^1((G/N)/Z, (0)) = 0$ and $H^1(Z,N) = 0$ since the orders of $Z$ and $N$ are coprime. Therefore, all possible $H$ are conjugate to $G_1$.

For the sporadic cases, by Lemma~\ref{2-primitive}, it suffices to check that the action of $G$ on $X$ is $3$-transitive. 
As $A_n$ acts $(n-2)$-transitively and $S_n$ $n$-transitively, this holds for $n\ge 5$, giving case (a). Likewise, the actions in cases (g), (h), (i), and (j) 
are all $3$-transitive \cite[Table 7.4]{Cameron}.
Cases (d) and (f) follow from Proposition~\ref{PSL}.
\end{proof}

\begin{prop}
\label{Forced 2-transitivity}
Let $G_1$ and $G_2$ be finite groups acting primitively on sets $X_1$ and $X_2$ respectively.  Suppose $g_1\in G_1$ has two orbits on $X_1$, of sizes $f_1\ge e_1$ and $g_2\in G_2$ has two orbits on $X_2$ of sizes $f_2\ge e_2$.
If $G_1\cong G_2$, $1/e_1+1/f_1= 1/e_2+1/f_2\le 1$, and $G_1$ acts $2$-transitively on $X_1$, then $G_2$ acts $2$-transitively on $X_2$.
\end{prop}

\begin{proof}
As $1/e_1+1/f_1= 1/e_2+1/f_2\le 1$, we have $e_1,e_2\ge 2$.
Again we use Tables 1--3. Assuming for the sake of contradiction that $G_2$ does not act $2$-transitively on $X_2$, $e_2+f_2$ must be $10$ 
or a perfect square, so $(G_2,X_2)$ must be an affine action of type (c) or (e), a product action, or a sporadic action of type (b).

If $G_2$ is affine of type (c), then it is solvable, so $G_1\cong G_2$ implies $G_1$ is also affine, either one of the first two subcases of (b) or the second subcase of (e).
None of these cases match case (c) for both group order and $1/e_1+1/f_1$.
Next, if $G_2$ is affine of type (e), then it still  has a non-trivial solvable normal subgroup, so $G_1$ still must be affine. The possibilities for $1/e_2+1/f_2$ 
are $1$, $4/7$, $1/2$, $1/3$, $1/4$. Examining the possible cases for $G_1$ with these values for $1/e_1+1/f_1$, we see that $|G_1| \neq |G_2|$ in every case.

In Table 2, $G_2$ always has two simple factors in its composition series, so it cannot be isomorphic to any $G_1$, which belongs to Table 1 or 3 and therefore has at most one
simple factor.

\end{proof}

\section{The main theorems}

In this section, we prove the main results of the paper. 

We begin by analyzing how the coefficients of $\gamma_1(u)$ and $\gamma_2(u)$ depend on the coefficients $\kappa_k$ of $\chi(t)$.
Let $W$ denote the ring of polynomials in variables $z_1,z_2,z_3,\ldots$. 
If $a\in \Q$ and $b\in \C^\times$, a \emph{series of type $(a,b)$} in $t$ 
means a Puiseux series $F(t)$ with coefficients in $W$ of the form
$$F(t)= b t^a  + \sum_{k=1}^\infty w_k t^{a+k}$$
where each $w_k$ is a non-zero constant multiple of $z_k$ plus a polynomial  in $z_1,\ldots,z_{k-1}$. 
The \emph{specialization of $F$ to $(c_1,c_2,\ldots)$} is 
the Puiseux series $bt^a + \sum_{k=1}^\infty b_k t^{a+k}$ obtained by substituting $c_i$ for $w_i$ in $F$ for all $i\ge 1$.

\begin{lem}
\label{solve}
The coefficients $b_1,\ldots,b_k$ of the specialization of $F$ to $(c_1,c_2,\ldots)$ uniquely determine
the values $c_1,c_2,\ldots,c_k$.
\end{lem}
\begin{proof}
They are  determined by solving the system of equations 
$$w_1(z_1) = b_1,\ w_2(z_1,z_2) = b_2,\ldots,\ w_k(z_1,\ldots,z_k) = b_k$$
iteratively for $z_1,z_2,\ldots,z_k$.
\end{proof}

\begin{prop}
\label{standard}
We have the following formal statements:
\begin{enumerate}
\item If $F(t)$ is of type $(a,b)$ with $b>0$ and $m$ is a non-zero integer, then $F(t)$ has an $m^{\text{th}}$ root which is a series of type $(a/m,b^{1/m})$,
where $b^{1/m}$ denotes the positive $m^{\text{th}}$ root of $b$.
\item If $F(t)$ is a series of type $(1,1)$ (which, in particular, means it is a power series), then $F^{-1}(t)$ is a  series of type $(1,1)$.
\item If $F(t)$ is a series of type $(a,b)$ and $G(t)$ is a series of type $(c,d)$, with $c$ a positive integer, then $G(F(t))$ is a series of type $(ac, d b^c)$.
\end{enumerate}
\end{prop}

\begin{proof}
We use throughout the proof the fact that if $\sum_{i=0}^\infty v_i t^i$ and $\sum_{j=0}^\infty w_j t^j$ are power series in $t$ where $v_i$ and $w_i$ are
both polynomials in $z_1,\ldots,z_i$, then their product is again such a polynomial.

For part (1), we write $F(t) = b t^a (1 + \sum_{k=1}^\infty (w_k/b) t^k)$, and use Newton's binomial theorem to write
$$F(t)^{1/m} = b^{1/m} t^{a/m}\Big(1+\sum_{l=1}^\infty \binom{1/m}l\Bigl(\sum_{k=1}^\infty (w_k/b) t^k\Bigr)^l\Bigr).$$
For $l\ge 2$, expanding $\Bigl(\sum_{k=1}^\infty (w_k/b) t^k\Bigr)^l$, the $t^r$ coefficient is a polynomial in the variables $z_1,\ldots,z_{r-1}$ for $r\ge 1$.
The $l=1$ contribution gives a $t^r$ coefficient which is a non-zero multiple of $z_r$ plus a polynomial in $z_1,\ldots,z_{r-1}$. This gives (1).

For part (2), we use the Lagrange inversion formula \cite[Theorem 5.4.2]{Stanley}.
The $t^r$ coefficient of $F^{-1}(t)$ is $1/r$ times the $t^{r-1}$ coefficient of 
$$(F(t)/t)^{-r} = (1 + \sum_{k=1}^\infty w_k t^k)^{-r} =  1 + \sum_{l=1}^\infty \binom{-r}l \Bigl(\sum_{k=1}^\infty w_k t^k\Bigr)^l,$$
which is$\binom{-r}{r-1}z_{r-1}$ plus a polynomial in $z_1,\ldots,z_{r-2}$ exactly as before.

For part (3), if for all $i\ge 1$, $v_i$ denotes the $t^i$ coefficient of $F$ and $w_i$ the $t^i$ coefficient of $G$, then 
$$G(F(t)) =  d F(t)^c+\sum_{l=1}^\infty w_i F(t)^{c+i} = d(bt^a)^c(1+ \sum_{m=1}^\infty (v_m/b) t^m)^c + \sum_{l=1}^\infty w_i F(t)^{c+l}.$$
The contribution to $t^r$ from the first summand includes the $t^r$-term in $c\sum_{m=1}^\infty (v_m/b)t^m$ together with terms from higher powers of $\sum_{m=1}^\infty (v_m/b) t^m$,
for which the coefficient is a polynomial in $z_1,\ldots,z_{r-1}$. The contribution from the $l$th term of the second summand to the $t^r$ term can be expressed as a polynomial in $z_1,\ldots,z_{r-al}$.
In total, therefore, the $t^r$ term is a non-zero constant multiple of $z_r$ plus a polynomial in $z_1,\ldots,z_{r-1}$.
\end{proof}

Note that in each case in the above result, if a specialization of input series $F(t)$ (as well as $G(t)$ for part (3)) converges in a non-trivial disk, then the same specialization of the
output series converges in a disk.

\begin{lem}
\label{alpha2}
The function $\alpha_2(u)$ is given in a neighborhood of $+\infty$ on the real line by a power series in $u^{-1/e}$
which is obtained by taking a power series of type $(1,1)$ depending only on $e$, evaluating at $u^{-1/e}$,
multiplying by $c:=\kappa_{-e}^{1/e}$, and specializing to 
\begin{equation}
\label{special z}
(z_1,z_2,\ldots) = (\kappa_{1-e} c^{1-e},\kappa_{2-e} c^{2-e},\ldots,\kappa_f c^f,0,0,\ldots)
\end{equation}
\end{lem}

\begin{proof}
By Proposition \ref{standard}~(1), there exists a type $(1,1)$ series $H(t)$ such that
$$H(t)^{-e} = t^{-e}\Bigl(1+\sum_{k=1}^\infty z_k t^k\Bigr).$$
The specialization of $F(t) := H(c^{-1} t)^{-e}$ to \eqref{special z} is
$$\kappa_{-e}t^{-e}(1 + \sum_{k=1}^{e+f} \kappa_{k-e} c^{k-e} (c^{-1}t)^k) = \kappa_{-e}t^{-e} + \sum_{k=1}^{e+f} \kappa_{k-e} t^{k-e} = \chi(t).$$

By Proposition \ref{standard}~(2), the formal inverse $G(t)$ of $H(t)$ is given by a series in $t$ of type $(1,1)$, so 
$$F(cG(u^{-1/e})) = H(G(u^{-1/e}))^{-e} = (u^{-1/e})^{-e} = u.$$
The function $G$ now has the desired properties.

\end{proof}

\begin{lem}
\label{tchiprime}
There exists a series in $t$ of type $(e,-1/e\kappa_{-e})$ whose specialization to \eqref{special z} evaluated on $c^{-1}t$
gives $1/t \chi'(t)$ in a neighborhood of $0$.
\end{lem}

\begin{proof}
The specialization of
$$F(t) := -e\kappa_{-e}t^{-e}\Bigl(1 + \sum_{k=1}^\infty \frac{e-k}e z_kt^k\Bigr)$$
to \eqref{special z} evaluated at $c^{-1} t$ gives $t \chi'(t)$. By Proposition \ref{standard}~(1), the reciprocal of $F(t)$
is of type $(e,-1/e\kappa_e)$, and its specialization, evaluated at $c^{-1}t$, gives the power series expansion
of $\frac 1{t\chi'(t)}$.
\end{proof}

\begin{prop}
There exists a series in $t$ of type $(e,-1/e)$ whose specialization to \eqref{special z} when evaluated at 
$u^{1/e}$ gives $\gamma_2(u)$ in a neighborhood of $+\infty$ on the real line.
\end{prop}

\begin{proof}
By Lemmas \ref{alpha2} and \ref{tchiprime}, the series for $\gamma_2(u) = \frac 1{\alpha_2(u)\chi'(\alpha_2(u))}$ is 
$$F(c^{-1}cG(u^{-1/e})) = F(G(u^{-1/e})).$$
By Proposition~\ref{standard}~(3), this is given by specializing a series of type 
$$(e,-c^e/e\kappa_{-e}) = (e,-1/e)$$
in $u^{-1/e}$ to \eqref{special z}.
\end{proof}

We therefore have an expansion 
\begin{equation}
\label{gamma2}
\gamma_2(u) \in -\frac{u^{-1}}e + u^{-1-1/e}\C[[u^{-1/e}]].
\end{equation}
We can calculate $\gamma_1(u)$ in the same way, regarding $\chi(t)$ as a Laurent series in $1/t$ with leading term $\kappa_f (1/t)^{-f}$, so
$\alpha_1(u)$ has a Laurent series expansion in $u^{-1/f}$ with leading term $\kappa_f^{-1/f}u^{1/f}$,
and
\begin{equation}
\label{gamma1}
\gamma_1(u) \in  \frac {u^{-1}}f  + u^{-1-1/f} \C[[u^{-1/f}]].
\end{equation}

Recall that we have defined $\chi^+(t)=\sum_{k\ge 1}\kappa_k t^k$ and $\chi^-(t) := \sum_{k\le -1} \kappa_k t^k$
and defined by $[\chi^+]$ and $[\chi^-]$ the equivalence class of these functions under linear rescaling of $t$.

\begin{prop}
\label{scale}
If $\chi_1$ and $\chi_2$ are Laurent polynomials with non-negative coefficients satisfying $\chi_i(1) = 1$, $\chi'_i(1)=0$,
$[\chi_1^+] = [\chi_2^+]$, $[\chi_1^-] = [\chi_2^-]$, and the constant terms of $\chi_1$ and $\chi_2$ are equal, then $\chi_1=\chi_2$.
\end{prop}

\begin{proof}
As the constant coefficients of $\chi_1$ and $\chi_2$ are the same, $\chi_1^+(1) + \chi_1^-(1) = \chi_2^+(1)+\chi_2^-(1)$.
We have $\chi_1^+(t) = \chi_2^+(at)$ and $\chi_1^-(t) = \chi_2^-(bt)$ for all $t>0$, where without loss of generality we may assume $a\ge 1$.
Thus,
$$\chi_1^-(1) = \chi_2^-(1) + \chi_2^+(1) - \chi_1^+(1) = \chi_2^-(1) + \chi_2^+(1) - \chi_2^+(a) \le \chi_2^-(1) = \chi_1^-(1/b).$$
As $\chi_1^-$ is decreasing, this implies $b\ge 1$.

Since $\chi_i^{\pm}$ are convex, 
\begin{equation}
\label{prime1}
(\chi_1^+)'(1) = a(\chi_2^+)'(a) \ge (\chi_2^+)'(a) \ge (\chi_2^+)'(1)
\end{equation}
and
\begin{equation}
\label{prime2}
(\chi_1^-)'(1) \ge (\chi_1^-)'(1/b) = b(\chi_2^-)'(1) \ge (\chi_2^-)'(1).
\end{equation}
However, $(\chi_i^-)'(1) + (\chi_i^+)'(1) = \chi'_i(1) = 0$ for $i=1,2$, so equality must hold in both \eqref{prime1} and \eqref{prime2},
implying $a=b=1$.
\end{proof}

\begin{prop}
\label{e=1}
If two unbiased random walks have the same spectrum and have support bounded below by $-1$, then the two walks are equivalent.
\end{prop}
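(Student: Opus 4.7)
The plan is to exploit the fact that when $e=1$, the Puiseux expansion~\eqref{gamma2 expansion} of $\gamma_2$ at $z=\infty$ involves only integer powers of $z^{-1}$, while Proposition~\ref{gamma1 expansion} presents $\gamma_1$ as a genuine Puiseux series in $z^{-1/f}$. Since the spectrum determines $\gamma_1+\gamma_2$ (via the Laplace-transform argument outlined in the introduction), for every positive integer $j$ not divisible by $f$ the coefficient of $z^{-1-j/f}$ in $\gamma_1+\gamma_2$ is exactly $a'_j$ evaluated at the specialization $x_i=\kappa_{f-i}/\kappa_f^{1-i/f}$.

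First I would apply Lemma~\ref{unique} to the special sequence $(a'_j)$: the values $a'_1,\ldots,a'_{f-1}$ successively recover $\kappa_{f-i}/\kappa_f^{1-i/f}$ for $i=1,\ldots,f-1$. The missing rescaled coefficient at $i=f$, which is simply $\kappa_0$, I would extract from the $z^{-2}$ coefficient of $\gamma_1+\gamma_2$, which equals $a'_f+b'_1$. The analog of Proposition~\ref{gamma1 expansion} applied to $\gamma_2$ with $e=1$ gives $b'_1=\kappa_0$ exactly; combined with $a'_f=\kappa_0/f+Q$ for an explicit polynomial $Q$ in the already-recovered quantities, this produces the linear equation $a'_f+b'_1=\frac{f+1}{f}\kappa_0+Q$, whose coefficient of $\kappa_0$ is nonzero, hence $\kappa_0$ is determined.

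For $f\ge 2$ the coefficient $a'_{f+1}$ is still read off from the fractional-exponent part (since $f+1$ is not a multiple of $f$), and Lemma~\ref{unique} yields the final rescaled coefficient $\kappa_{-1}\kappa_f^{1/f}$. For $f=1$ the unbiased and probability conditions immediately force $\kappa_{-1}=\kappa_1=(1-\kappa_0)/2$. In either case, the Laurent polynomial
\[
\tilde\chi(u):=u^f+\sum_{i=1}^{f+1}\frac{\kappa_{f-i}}{\kappa_f^{1-i/f}}\,u^{f-i}
\]
is fully determined. By the convexity argument in Lemma~\ref{rescale}, $\tilde\chi$ admits a unique positive critical point $\lambda$, and since $\chi(1)=\tilde\chi(\lambda)=1$ and $\chi'(1)=\lambda\tilde\chi'(\lambda)=0$, the unique unbiased rescaling $\chi(t)=\tilde\chi(\lambda t)$ recovers $\chi$.

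The hard part will be the mixing of $\gamma_1$- and $\gamma_2$-contributions at integer exponents of $z^{-1}$: the fractional-exponent part of $\gamma_1+\gamma_2$ only sees $a'_j$ for $j$ not divisible by $f$, missing precisely the information about $\kappa_0$. The saving feature when $e=1$ is the clean identity $b'_1=\kappa_0$, which lets this one hidden coefficient be pinned down from the single integer-exponent term at $z^{-2}$.
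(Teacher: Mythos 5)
Your proof is correct, but it takes a genuinely different and much heavier route than the paper's. The paper's argument for Proposition~\ref{e=1} is elementary and purely combinatorial, living entirely in \S2: when $e=1$ and one normalizes so that $\kappa_{-1}=1$, the return probability $I_{n+1}$ (the $t^0$-coefficient of $\chi^{n+1}$) is a polynomial in $\kappa_0,\ldots,\kappa_n$ that is \emph{linear} in $\kappa_n$ with nonzero coefficient $n+1$, since the only monomial of total degree $n+1$ in the factors of $\chi^{n+1}$ contributing to $t^0$ and involving $\kappa_n$ is $\kappa_{-1}^n\kappa_n$. Thus $(I_1,I_2,\ldots)$ gives a special sequence in the sense of Lemma~\ref{unique}, which solves the triangular system directly; Lemma~\ref{rescale} then pins down the scaling. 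No Puiseux series, no Laplace asymptotics.

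You instead import all of the \S3 machinery: the spectrum determines the Puiseux expansion of $\gamma_1+\gamma_2$, and since $e=1$ forces $\gamma_2$ to have only integer exponents, the coefficients of $z^{-1-j/f}$ for $f\nmid j$ read off $a'_j$ directly, after which Lemma~\ref{unique} applied to the special sequence $(a'_j)$ recovers the rescaled $\kappa$'s. This is sound, and it is conceptually instructive: it is exactly the case of the general strategy in which the Puiseux exponents by themselves separate $\gamma_1$ from $\gamma_2$, so no Galois-theoretic input (Proposition~\ref{separate}) is required. Two caveats. First, there is an organizational circularity relative to the paper: Proposition~\ref{e=1} is stated and used in \S2, before \S3 is available, and the proof of Theorem~\ref{Main} in turn cites Proposition~\ref{e=1}; your proof would require reordering the exposition. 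Second, some of your claimed identities are not quite literal: \eqref{gamma2 expansion} only gives $b'_1\equiv x_1$ modulo a constant (and modulo the sign convention relating $g_\beta$ to $1/(\beta\chi'(\beta))$), not ``$b'_1=\kappa_0$ exactly'' --- but this is harmless, since all you need is that the $z^{-2}$ coefficient of $\gamma_1+\gamma_2$ is affine-linear in $\kappa_0$ with nonzero leading coefficient, which holds for $f\ge 2$, and for $f=1$ you correctly handle the case directly. Incidentally, the whole $\kappa_0$-extraction step is avoidable: $\kappa_0=I_1$ is already read off the spectrum, so the ``hard part'' you flagged is in fact trivial.
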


\begin{proof}
The constant coefficient of $(\kappa_{-1}t^{-1} + \kappa_0+\kappa_1 t + \cdots)^k$ can be expressed as $k (\kappa_{-1}^{k-1}\kappa_{k-1})$
plus a polynomial expression in terms of 
$$\kappa_0, \kappa_{-1} \kappa_1, \kappa_{-1}^2\kappa_2,\ldots,\kappa_{-1}^{k-2} \kappa_{k-2}.$$
Therefore, we can solve iteratively for $\kappa_0, \kappa_{-1}\kappa_1,\ldots,\kappa_{-1}^f \kappa_f$ given $I_1, I_2,\ldots, I_{f+1}$.  
This implies $I_\bullet$ determines the equivalence class $[\chi^+]$ of $\chi^+(t) := \sum_{k=0}^f \kappa_k t^k$.
The result now follows immediately from Proposition~\ref{scale}.
\end{proof}

We can now prove Theorem~\ref{Main}.

\begin{proof}
By Propositions~\ref{L asymptotics} and \ref{tilde asymptotics}, the spectrum $I_\bullet$ of $\chi$ determines
the Puiseux series for
$\gamma_1 - \gamma_2$, which by \eqref{gamma2} and \eqref{gamma1} can be written
$$\Bigl(\frac 1e+\frac 1f\Bigr) u^{-1} + \sum_{q\in (-\infty,-1)\cap \Q} c_q u^q.$$
The $u^{-1}$ coefficient is greater than $1$ if and only if $e=1$, in which case Proposition~\ref{e=1} implies that
$\chi$ is determined by $I_\bullet$.  It remains to consider the case $e>1$.

Let $K:=\C(t)$ and $K_\infty := \C((t^{-1}))$.
Fix once and for all an embedding of $\overline{K}\subset \overline{K_\infty}$.
The Puiseux series determined by $I_\bullet$ determines an element $\gamma_1-\gamma_2$ of $\overline K$.

Let $M$ denote the splitting field of the minimal polynomial of this element over $K$.
Let $\delta_1$ and $\delta_2$ denote any two elements of $M$ conjugate over $M$ such that $M$ is the splitting field of $K(\delta_1)$ over $K$, $K(\delta_1)$ is a minimal non-trivial extension of $K$,
and $\delta_1-\delta_2 = \gamma_1-\gamma_2$.
By Proposition~\ref{Forced 2-transitivity}, the action of $\Gal(M/K)$ on the set of conjugates of $\delta_1$ is $2$-transitive.
By Propositions \ref{two 2-trans} and \ref{two-orbit}, either $\gamma_1 \in \delta_1 + K$ or $\gamma_1\in -\delta_2+K$. 
We can regard $\delta_1$ and $\delta_2$, in some order, as Laurent series in $u^{-1/e}$ and 
$u^{-1/f}$ which are well-defined modulo $K_\infty$. If $f>e$, choose the numbering so that $\delta_1$ is a Laurent series in $u^{-1/f}$
and $\delta_2$ is a Laurent series in $u^{-1/e}$

Replacing $\chi(t)$ by $\chi(t^{-1})$ if necessary, we may assume $\delta_i\cong \gamma_i\pmod{\C((u)}$, which means that $\delta_1$ determines the coefficients of $u^{-1-1/e},u^{-1-2/e},\ldots,u^{-2+1/e}$
of $\gamma_1$ and $\delta_2$ determines the coefficients $u^{-1-1/f},u^{-1-2/f},\ldots,u^{-2+1/f}$ of $\gamma_2$.
By Lemma~\ref{solve} and \eqref{special z}, this means that $\delta_1$ and $\delta_2$ determine, respectively, the sequences
$$\kappa_{f-1}\kappa_f^{-1+1/f},\kappa_{f-2}\kappa_f^{-1+2/f},\ldots,\kappa_1\kappa_f^{-1/f}$$
and
$$\kappa_{1-e}\kappa_{-e}^{-1+1/e},\kappa_{2-e}\kappa_{-e}^{-1+2/e},\ldots,\kappa_{-1}\kappa_{-e}^{-1/e}.$$
Since $\kappa_0 = I_1$, $\delta_1$ and $\delta_2$ determine $[\chi^+]$ and $[\chi^-]$ respectively. By Proposition~\ref{scale},
these determine $\chi$.

\end{proof}

\begin{cor}
If $\rho\colon U(1)\to \SL(V)$ is a complex representation whose formal character is primitive and neither of degree $10$ nor of square degree, then
$\rho(U(1))$ is determined up to conjugation in $\SL(V)$ by the function $n\mapsto \dim (V^{\otimes n})^{U(1)}$.
\end{cor}

\begin{proof}
Defining $\chi(t)$ to be $(\dim V)^{-1}$ times the formal character of $\rho$, regarded as a Laurent polynomial in $t$, we see that $\chi(1) = 1$, and $\det(\rho)=1$ implies $\chi'(1)=0$.
Moreover, 
$$I_n = \frac{ \dim (V^{\otimes n})^{U(1)}}{(\dim V)^{n}}$$
is determined, so by Theorem~\ref{Main}, $\chi(t)$ is determined. Moreover,
$$\dim V = \limsup_{n\to \infty} \bigl(\dim (V^{\otimes n})^{U(1)}\bigr)^{1/n},$$
so this determines the formal character of $\rho$. The formal character determines $\rho(U(1))$ up to conjugation by $\GL(V)$, but since every element of $\GL(V)$ is a scalar multiple of an element of $\SL(V)$,
it determines $\rho(U(1))\subset \SL(V)$ up to conjugation.
\end{proof}

The following lemma is needed for the proof of Theorem~\ref{clean}.

\begin{lem}
\label{bias}
An unbiased random walk cannot have the same spectrum as a biased walk.
\end{lem}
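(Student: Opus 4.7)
The plan is to distinguish biased from unbiased walks by the asymptotic decay rate of the return probabilities: biased walks have exponentially small $I_n$, while Proposition~\ref{L asymptotics} forces the $I_n$ of a non-trivial unbiased walk to decay only polynomially, and these two behaviors are incompatible.

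First, I would establish an exponential upper bound on $I_n$ in the biased case. Since $I_n$ is the constant coefficient of the Laurent polynomial $\chi(t)^n$, Cauchy's formula on the circle $|t|=r$ gives
$$I_n = \frac{1}{2\pi}\int_{-\pi}^\pi \chi(re^{i\theta})^n\,d\theta$$
for any $r>0$, and non-negativity of the coefficients of $\chi$ combined with the triangle inequality yields $0\le I_n\le \chi(r)^n$. For a biased $\chi$, the function $\chi(e^x)$ is strictly convex (as in the proof of Lemma~\ref{rescale}) and attains its unique minimum at some $x_0\ne 0$, because $\chi'(1)\ne 0$. Consequently $\rho:=\chi(e^{x_0})<\chi(1)=1$, and taking $r=e^{x_0}$ yields $I_n\le \rho^n$ for every $n$.

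On the other hand, Proposition~\ref{L asymptotics} applied with $m=0$ to a non-trivial unbiased walk gives
$$L_h(s) = e^{-s}\sum_{n=0}^\infty \frac{s^nI_n}{n!} \sim \sqrt{\frac{J_2}{2\pi s}}\qquad\text{as }s\to\infty,$$
with $J_2>0$, so $L_h(s)$ decays only polynomially in $s$. However, the bound $I_n\le \rho^n$ would force
$$L_h(s) \le e^{-s}\sum_{n=0}^\infty \frac{(s\rho)^n}{n!} = e^{-(1-\rho)s},$$
which decays exponentially, a contradiction. Hence the spectra of biased and unbiased walks cannot coincide. I do not foresee any substantive obstacle; the only points to verify are the strict convexity of $\chi(e^x)$ (already recorded in Lemma~\ref{rescale}) and the positivity of $J_2$ (immediate from $\kappa_k\ge 0$ together with the existence of a nonzero $\kappa_k$ with $k\ne 0$).
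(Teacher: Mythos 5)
Your proof is correct, and it rests on the same basic dichotomy that drives the paper's argument --- polynomial asymptotics in the unbiased case versus exponential asymptotics in the biased case --- but the execution is different and, I think, cleaner. The paper works on the Wick-rotated side: it observes that $\tilde L_h(s)=\frac{1}{2\pi}\int_{-\infty}^{\infty}e^{-s(\chi(e^x)-1)}\,dx$ grows at least like $e^{s(1-r-\epsilon)}$ for a biased $\chi$ (by examining the integrand near $x_0=\log\lambda$, where $\chi(\lambda)=r<1$), and then invokes Proposition~\ref{tilde asymptotics}. You instead stay entirely on the $L_h$ side: the Cauchy estimate $I_n\le\chi(r)^n$ on circles $|t|=r$ gives $I_n\le\rho^n$ with $\rho=\min_{r>0}\chi(r)<1$, hence $L_h(s)\le e^{-(1-\rho)s}$, which directly contradicts the $\sqrt{J_2/(2\pi s)}$ asymptotic from Proposition~\ref{L asymptotics}. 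The advantage of your route is that $L_h(s)=e^{-s}\sum_n s^nI_n/n!$ is manifestly a function of the spectrum alone, so no further bridge is needed; the paper's phrasing, by contrast, leans on the passage from the spectrum to the asymptotics of $\tilde L_h$, a passage the text establishes (via the coefficients $A_\ell$) only for unbiased walks, so its argument for the biased side implicitly wants a supplementary bound of exactly the kind your $I_n\le\rho^n$ supplies. One small thing to make explicit: if the biased walk has one-sided support, $\chi(e^x)$ has no interior minimum, but then $\inf_{r>0}\chi(r)$ is $\kappa_0<1$ or $0$, and the inequality $I_n\le\chi(r)^n$ still yields exponential (or immediate) decay; and a trivial unbiased walk has $I_n\equiv 1$, distinguished at once. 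These edge cases cost you only a sentence.
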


\begin{proof}
If $\chi(t)$ is the shape of a biased random walk, then $\chi'(1) \neq 0$, so there exists $\lambda > 0$ with 
$\chi(\lambda) = r < 1$.  It follows that for all $\epsilon > 0$,
$$\int_{-\infty}^\infty e^{-s(\chi(e^x)-1)}dx > e^{s(1-r - \epsilon)}$$
if $s$ is sufficiently large.  By Proposition~\ref{tilde asymptotics}, it follows that the spectrum of a biased random walk cannot be the same as the spectrum of an unbiased walk.
\end{proof}

Next, we prove Theorem~\ref{clean}.

\begin{proof}
Suppose $\omega(t)$ is the shape of a random walk with the same spectrum as $\chi(t)$.  
By Lemma~\ref{bias}, $\omega(t)$ is also unbiased.  Let $d$ be the greatest common divisor of $\supp(\omega)$.  
Replacing $\omega(t)$ with $\omega(t^{1/d})$,
we may assume without loss of generality that $\supp(\omega)$ generates $\Z$.  The results of \S3 now apply to $\omega$ as well as $\chi$, so 
$\gamma_1-\gamma_2$ gives the same Puiseux series in $u$ as the corresponding expression for $\omega$.  
Therefore, the groups $G$ of the corresponding Galois extensions of $\C(u)$
are isomorphic.  
However, as the maximum and minimum elements of $\supp(\chi)$ are relatively prime to one another, $\chi$ must be primitive.
By \cite[Theorem 6.2]{Muller0}, therefore,
either $e=1$, $G=A_n$, or $G=S_n$, where $n=\deg(\chi) = \deg(\omega)$,
and these cases are covered by the proof of Theorem~\ref{Main}.

\end{proof}

Recall that $X_{e,f}$ is the variety of dimension $e+f-1$ defined by the conditions \eqref{affine} on $\kappa_{-e},\ldots,\kappa_f$.
For Theorem~\ref{generic}, we need the following proposition.

\begin{prop}
\label{trans}
For all $e,f\ge 1$, there is a dense Zariski open subset of $X_{e,f}$ all of whose points correspond to Laurent polynomials
$\chi$ such that the Galois group $G$ of the splitting field of $L/K$ contains a transposition.
 \end{prop}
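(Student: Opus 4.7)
The plan is to identify $G$ with the geometric monodromy group of the covering $\chi \colon \P^1 \to \P^1$ of degree $n = e+f$, which by the Riemann existence theorem coincides with the Galois group of the splitting field of $L/K$. This group is generated by local monodromies around branch points. A simple critical point $t_0 \in \C^\times$ of $\chi$ (meaning $\chi'(t_0)=0$ but $\chi''(t_0)\ne 0$) whose critical value $z_0 = \chi(t_0)\in\C$ is not achieved by any other critical point of $\chi$ in $\P^1$ will contribute a transposition to $G$, namely the swap of the two sheets which come together at $t_0$. (The ramification points $t=0$ and $t=\infty$ both lie above $\infty\in\P^1$, so they cannot conflict with finite critical values.) It therefore suffices to exhibit a non-empty Zariski open subset $U\subseteq X_{e,f}$ on which every finite critical point is simple and the resulting $e+f$ finite critical values are pairwise distinct.

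For $\chi\in X_{e,f}$ with $\kappa_{-e},\kappa_f\neq 0$, the polynomial $t^{e+1}\chi'(t)\in \C[t]$ has degree $e+f$ with nonzero constant term, so has $e+f$ roots in $\C^\times$ counted with multiplicity (matching the total finite ramification $2n-2-(e-1)-(f-1)=n$ predicted by Riemann--Hurwitz). Non-vanishing of the discriminant of $t^{e+1}\chi'(t)$ and of all pairwise differences of critical values are Zariski open conditions, and together they define $U$. The core task is to show $U\neq \emptyset$.

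To exhibit a point in $U$, I would consider $\chi_0(t) := \tfrac{f}{e+f}t^{-e} + \tfrac{e}{e+f}t^f$, which lies in $X_{e,f}$ since $\chi_0(1)=1$ and $\chi_0'(1)=0$. One computes $\chi_0'(t) = \tfrac{ef}{e+f}t^{-e-1}(t^{e+f}-1)$, so the critical points are exactly the $(e+f)$-th roots of unity, all simple, and the critical value at such an $\omega$ equals $\omega^f$ (using $\omega^{-e} = \omega^f$). When $\gcd(e,f) = 1$, the map $\omega\mapsto\omega^f$ is a bijection on $(e+f)$-th roots of unity, so the critical values are pairwise distinct and $\chi_0\in U$. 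When $\gcd(e,f) > 1$, necessarily $e,f\ge 2$, and I would perturb by setting $\chi_\epsilon := \chi_0 + \epsilon(t-1)^2$. Since $(t-1)^2 = t^2-2t+1$ has support $\{0,1,2\}\subseteq\{-e,\ldots,f\}$ and vanishes to second order at $t=1$, we have $\chi_\epsilon\in X_{e,f}$ for every $\epsilon$. By the implicit function theorem, each critical point $t_k(\epsilon)$ varies analytically from $\omega_k$, and a standard computation using $\chi_0'(\omega_k)=0$ gives the first-order variation of the critical value as $(\omega_k - 1)^2$. For distinct roots of unity $\omega_k\ne \omega_l$ with coincident $\omega_k^f = \omega_l^f$, the equation $(\omega_k-1)^2 = (\omega_l-1)^2$ would force $\omega_k + \omega_l = 2$, hence $\omega_k = \omega_l = 1$, a contradiction. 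So the first-order perturbation separates every coincident pair of critical values, placing $\chi_\epsilon$ in $U$ for generic small $\epsilon$.

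The main obstacle is this non-emptiness step, which I plan to handle by the explicit example and perturbation argument above; the remainder reduces to standard monodromy theory for branched covers of $\P^1$.
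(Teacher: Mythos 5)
Your argument is correct, but it takes a genuinely different route from the paper's. Both proofs rest on the same observation: if some finite critical value $z_0$ of $\chi$ has exactly one critical point above it and that critical point is simple, then the local monodromy at $z_0$ is a transposition. Where they diverge is in how to show this configuration is generic on $X_{e,f}$. The paper bounds the \emph{bad locus} by a parameter count: for $\chi(t)-z_0$ to have two double zeros, or one zero of order $\ge 3$, forces $\chi$ into a constructible set of dimension at most $e+f-2 < \dim X_{e,f} = e+f-1$, so the complement is dense and open. You instead produce an explicit witness in the good locus: $\chi_0(t)=\tfrac{f}{e+f}t^{-e}+\tfrac{e}{e+f}t^f$ has simple critical points at the $(e+f)$-th roots of unity $\omega$ with critical values $\omega^f$, pairwise distinct exactly when $\gcd(e,f)=1$; when $\gcd(e,f)>1$ you perturb by $\epsilon(t-1)^2$ and use the first-order variation $(\omega-1)^2$ to split the coincident critical values. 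Both approaches are sound. The dimension count spares you the case split and the arithmetic of roots of unity, while your construction makes the non-emptiness of the good open set fully concrete instead of a consequence of the dimension inequality together with irreducibility of $X_{e,f}$. One step you leave implicit---that pairwise distinctness of the finite critical values is a Zariski-open condition on the coefficients of $\chi$---can be made explicit by noting it is the non-vanishing of the discriminant in $z$ of the resultant $\Res_t(\chi(t)-z,\,t^{e+1}\chi'(t))$, which is a polynomial in the $\kappa_k$.
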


\begin{proof}
We claim that the set of $\chi\in X_{e,f}$ such that the map $t\mapsto \chi(t)$ has a critical value in $\C$ with ramification degree $>1$ is contained in a proper Zariski-closed subset of $X_{e,f}$. Indeed, 
if $\chi$ has such a critical value $u_0$, then
$\chi(t)-u_0$ must have at least two double zeroes or a zero of order $\ge 3$.  If $t_1$ and $t_2$ are double zeroes, then
the condition $\chi(1) = 1$ implies
$$\chi(t) = \frac{(t-t_1)^2(t-t_2)^2 P(t)}{t^e} + 1 - (1-t_1)^2(1-t_2)^2P(1)$$
for some polynomial $P(t)$ of degree $e+f-4$.  The condition $\chi'(1) = 0$ gives the constraint
\begin{align*}
2(1-t_1)^2(1-t_2)P(1) &+ 2(1-t_1)(1-t_2)^2P(1) \\
& -e (1-t_1)^2(1-t_2)^2 P(1)+ (1-t_1)^2 (1-t_2)^2 P'(1) = 0,
\end{align*}
which for any given pair $(t_1,t_2)$ is a single linear constraint of the coefficients of $P$.  Except when $t_1$ or $t_2$ equals $1$, this constraint is non-trivial.
Therefore, the number of degrees of freedom needed to determine such a $\chi$ is $e+f-2 < \dim X_{e,f}$.

Likewise, if $\chi(t)-u_0$ has a triple zero at $t_1$, $\chi$ must be of the form
$$\frac{(t-t_1)^3 P(t)}{t^e} + 1 - (1-t_1)^3P(1),$$
where $P$ is a polynomial of degree $e+f-3$ satisfying
$$3(1-t_1)^2 P(1) - e(1-t_1)^3 P(1) + (1-t_1)^3 P'(1) = 0,$$
which is a non-zero constraint on $P$ except when $t_1=1$.  Therefore, again, the number of degrees of freedom in such a $\chi$ is $e+f-2$.

For any $\chi$ which does not have such a $u_0$, any critical value of $\chi$ in $\C$ is a point over which the map $t\mapsto \chi(t)$ has a single double point and no other critical point.  The local monodromy around that point is therefore a transposition. 
\end{proof}

\begin{lem}
\label{generic primitive}
For $f\ge e\ge 2$,
there exists a Zariski-dense open subset of $X_{e,f}$ such that every $\chi(t)$ belonging to this set is primitive.
\end{lem}

\begin{proof}
If $\chi$ is not primitive, it is a composition $f(g(t))$ of Laurent polynomials, where we may assume $g(t)$ has leading coefficient $1$ and constant term $0$.
Moreover, $f(g(1)) = 1$, and $f'(g(1)) = 0$ or $g'(1)=0$.  If $f$ and $g$ have degree $a>1$ and $b>1$ respectively, we have $ab = e+f$.
Therefore, we have $(a+1)+(b+1)-4 = a+ b - 2$ degrees of freedom in choosing $(f,g)$, while $X_{e,f}$ has dimension 
$$e+f-1 = ab-1 = (a-1)(b-1)+a+b-2 > a+b-2.$$
\end{proof}

We easily deduce Theorem~\ref{generic} from these two results.  
\begin{proof}
Let $U$ denote a dense open set of $X_{e,f}$ for which every $\chi\in U$ is primitive and the Galois group $G$ of the splitting field of $\C(t)/\C(\chi(t))$ contains a transposition.
A primitive permutation group which contains a transposition must be the full symmetric group.
As we have already seen, in this case, the spectrum $I_\bullet$ determines $\chi(t)$ up to equivalence.

\end{proof}

\end{document}